\begin{document}

\newtheorem{thm}{Теорема}
\newtheorem{lm}{Утверждение}
\newtheorem*{zam}{Замечание}
\newtheorem*{sss}{Следствие}
\theoremstyle{definition}
\newtheorem*{zakzam}{Заключительные замечания}
\theoremstyle{definition}
\newtheorem{df}{Определение}
\newtheorem*{blagodar}{Благодарности}

\author{\textbf{Н.~Л.~Поляков}}
\title{\textbf{Об алгоритмической разрешимости проблемы распознавания \\бесквадратности данного
слова относительно системы из двух \\определяющих соотношений}}
\date{}
\maketitle

\begin{abstract}Проблема распознавания бесквадратности данного слова относительно произвольной системы из двух определяющих соотношений
алгоритмически разрешима.
\end{abstract}

\par Пусть $A$ \--- непустое рекурсивное множество, называемое \emph{алфавитом}, элементы
 которого называются \emph{буквы}. \emph{Словом} в алфавите $A$
 называется конечная последовательность \mbox{$x_{1}x_{2}\ldots x_{n}$}, \mbox{$0 \leq n < \omega$}, элементов
 из множества $A$. Натуральное число $n$ называется длиной слова \mbox{$x=x_{1}x_{2}\ldots x_{n}$} и обозначается символом \mbox{$|x|$}.
 Последовательность длины $0$ является пустым множеством, поэтому пустое слово будет обозначаться символом
 $\emptyset$. Множество всех слов в алфавите $A$ обозначается символом $A^\ast$. На этом множестве определена бинарная операция $"$приписывания$"$, называемая
 иногда \emph{конкатенация}, ставящая в соответствие паре слов
  \mbox{$(\, x_{1}x_{2} \ldots x_{n}, \; y_{1}y_{2} \ldots y_{m} \,)$} слово \mbox{$x_{1}x_{2} \ldots x_{n}y_{1}y_{2} \ldots
 y_{m}$} и наделяющая множество $A^\ast$ структурой полугруппы с единицей, которая называется свободным моноидом на множестве
 $A$. Мы будем обозначать ее тем же символом $A^\ast$.
 \par Слово $w$ называется \emph{бесквадратным}, если $$w = ussv
 \Rightarrow s = \emptyset$$ для всех слов $u, s, v$ из $A^\ast$. Множество всех бесквадратных слов в алфавите A мы будем обозначать $SF(A)$.
 Согласно
 известному результату А. Туэ (\,см.~\cite{lol}\,), полученном в $1906$ году, если алфавит $A$ содержит по
 крайней мере три буквы, то множество $SF(A)$ бесконечно (для одно-
 и двухбуквенных алфавитов, как легко проверить, верно обратное). Этот результат А. Туэ неоднократно
 переоткрывался и передоказывался другими авторами (\,см., напр.~\cite{archon}\,). Обобщения этого результата и
 решение родственных задач можно найти в ~\cite{Brink}, ~\cite{Karpi}, ~\cite{Karpi2},~\cite{Karpi4} и др.
 В ряде работ исследовано понятие
 \emph{бесквадратности относительно системы определяющих
 соотношений}.\par
 Системой определяющих соотношений называется произвольное бинарное
 иррефлексивное отношение $\pi$ на множестве $A^\ast$.
 Системы определяющих соотношений часто записывают в виде множества
 "равенств" \; \mbox{$u = v$}, однако в данной работе мы будем использовать
 обозначение \mbox{$u = v$}, где \mbox{$u,v \in A^{\ast} $} только для обозначения равенства в свободном моноиде,
 т.е. графического совпадения слов.\par
Для данной системы определяющих соотношений $\pi$ на множестве
$A^\ast$ следующим образом определяется бинарное отношение
$\stackrel{\pi}{\leftrightarrow}$ \emph{непосредственной
выводимости}: \mbox{$x\stackrel{\pi}{\leftrightarrow}y$} тогда и
только тогда, когда \mbox{$x = rus \; \& \; y = rvs \; \& \;
(\,(\,u, v\,)\in\pi \: \vee \: (\,v, u\,)\in\pi \,)$} для некоторых
слов $r, s, u, v$ из $A^\ast$. Рефлексивное и транзитивное замыкание
отношения $\stackrel{\pi}{\leftrightarrow}$ называется равенством
относительно системы определяющих соотношений $\pi$ и обозначается
символом $\stackrel{\pi}{=}$. Это отношение является конгруэнцией
моноида $A^\ast$. Класс эквивалентности слова \mbox{$w\in A^\ast$}
обозначается символом $[w]_\pi$. Пара $(A, \pi)$ иногда называется
копредставлением любого моноида, изоморфного фактор-моноиду моноида
$A^\ast$ по конгруэнции $\stackrel{\pi}{=}$.
\par Слово \mbox{$w\in A^\ast$} называется \emph{бесквадратным относительно
системы соотношений} $\pi$, если \mbox{$[w]_\pi\subseteq SF(A)$}.
Множество всех таких слов будем обозначать \mbox{$SF(A,\pi)$}. Если
множество $SF(A,\pi)$ рекурсивно, то говорят, что \emph{разрешима
проблема распознавания бесквадратности данного слова относительно
системы определяющих соотношений} $\pi$. В общем случае эта проблема
неразрешима (см.~\cite{Karpi}). Однако, имеет место следующая

\begin{thm}Пусть даны алфавит $A$ и  система определяющих соотношений
\mbox{$\pi=\{\,(\, u_i, u_j \,)\mid 1 \leq i < j \leq n \, \}$}, где
\mbox{$n < \omega$}, \mbox{$u_i \in A^\ast$} и
\mbox{$u_i=u_j\Rightarrow i=j$} для всех $i$ и $j$, \mbox{$1\leq
i\leq n,\; 1\leq j\leq n$}. Тогда множество $SF(A,\pi)$ рекурсивно.
\end{thm}

В частности, разрешима проблема распознавания бесквадратности
данного слова относительно одноэлементной системы определяющих
соотношений $\{\,(\, u,v \, )\, \}$, $u\neq v$.
\par Эти результаты
доказаны  A. Карпи, A. де Лука в~\cite{Karpi}. Ниже будет доказано,
что относительно двухэлементной системы определяющих соотношений эта
проблема также разрешима.
\par Последние результаты представляют
интерес, в частности, в связи с открытостью вопроса о
\emph{разрешимости проблемы равенства относительно произвольной
одно- и двухэлементной системы определяющих соотношений}, т.е. о
рекурсивности отношений \mbox{$x\stackrel{\pi}{=}y$} и
\mbox{$x\stackrel{\pi}{=}w$} для данного слова $w$, где $\pi$ имеет
вид $\{\,(u,v)\, \}$ или $\{\,(u, v), (r, s)\, \}$ (\,пример системы
из \emph{трех} определяющих соотношений с неразрешимой проблемой
равенства привел Матиясевич~\cite{mati}; см. также обзор
в~\cite{lol}\,). \par На самом деле в работе ~\cite{Karpi} доказано
более сильное, чем теорема $1$ утверждение. А именно, доказано, что
для любой системы определяющих соотношений $\pi$ вида \mbox{$\{\,(\,
u_i, u_j \,)\mid 1 \leq i < j \leq n \, \}$}, \mbox{$n<\omega$},
выполнено следующее условие : $$ \forall w \in A^\ast \; w \in
SF(A,\pi) \Rightarrow \|[w]_\pi\| < \omega.\eqno (\ast)
$$
Из этого утверждения, конечно, следует теорема $1$. Действительно,
для любого рекурсивного множества \mbox{$P(A) \subseteq A^\ast$} его
$"$релятивизация$"$ относительно системы соотношений $\pi$, т.е.
множество \mbox{$P(A, \pi) = \{\,u \in A^\ast \mid [u]_\pi \subseteq
P(A) \, \}$} также рекурсивно, если каждый его элемент $w$ имеет
конечный класс эквивалентности относительно системы соотношений
$\pi$. Для доказательства этого факта можно рассмотреть для любого
слова \mbox{$w \in A^\ast$} последовательность конечных множеств
слов \mbox{$U_1, U_2,\ldots U_i,\ldots$} , где \mbox{$U_1 = \{\, w
\, \}$}, а \mbox{$U_{i+1} = U_i \cup \{ \, u \in A^\ast \mid \exists
v\in U_i\; v\stackrel{\pi}{\leftrightarrow}u\, \}$}. Алгоритм,
осуществляющий последовательную проверку принадлежности множеству
$P(A)$ каждого элемента очередного множества $U_i$ до тех пор, пока
не будет получен отрицательный ответ или $U_i$ не совпадет с
$U_{i-1}$, разрешает вопрос о принадлежности слова $w$ множеству
$P(A, \pi)$ поскольку конечность множества
\mbox{$[w]_{\pi}=\bigcup_{i<\omega}U_{i}$} гарантирует, что одно из
этих событий произойдет на некотором конечном шаге.
\begin{thm} Пусть даны алфавит $A$ и  система определяющих соотношений \mbox{$\pi = \{\, (\, u_i, u_j \, )\mid 1\leq i < j \leq n\,\}\cup
\{\, (\, v_k, v_l \,) \mid 1\leq k < l\leq m \,\}$}, где \mbox{$n,m
< \omega$}, \mbox{$u_i,v_k \in A^\ast$} и \mbox{$u_i = u_j
\Rightarrow i = j\,\&\, v_k = v_l \Rightarrow k = l$} для всех
номеров \mbox{$i,j,k,l$}, \mbox{$1\leq i,j \leq n$}, \mbox{$1\leq
k,l \leq n$}. Тогда выполнено условие $(\ast)$.
\end{thm}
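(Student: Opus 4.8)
The plan is to establish $(\ast)$ in its contrapositive form: I will show that if $\|[w]_\pi\|$ is infinite, then $[w]_\pi\not\subseteq SF(A)$, i.e.\ some word $\pi$-equivalent to $w$ carries a square, so that $w\notin SF(A,\pi)$. Write $\pi=\pi_1\cup\pi_2$ with $\pi_1=\{\,(u_i,u_j)\mid 1\le i<j\le n\,\}$ and $\pi_2=\{\,(v_k,v_l)\mid 1\le k<l\le m\,\}$. The first reduction is from \emph{infinite} to \emph{unbounded in length}: since $A$ is finite, for every $L$ there are only finitely many words of length at most $L$, hence only finitely many square-free words of length at most $L$; so a set of square-free words is infinite precisely when the lengths of its members are unbounded. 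It therefore suffices to assume that every word in $[w]_\pi$ is square-free (otherwise we are already done) and to derive a contradiction from the assumption that the lengths of the words in $[w]_\pi$ are unbounded.

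First I would transfer the single-clique result to every member of the class. If $w'\in[w]_\pi$, then $[w']_\pi=[w]_\pi\subseteq SF(A)$, and since $[w']_{\pi_1}\subseteq[w']_\pi$ and $[w']_{\pi_2}\subseteq[w']_\pi$, the word $w'$ is square-free relative to each single clique $\pi_1$ and $\pi_2$ separately. The single-clique instance of $(\ast)$ recalled above (\cite{Karpi}) then yields that \emph{both} classes $[w']_{\pi_1}$ and $[w']_{\pi_2}$ are finite for \emph{every} $w'\in[w]_\pi$. In particular no growth to unbounded length can be produced inside a single clique. I would organise the class accordingly by the alternating closure $V_0=\{w\}$, $V_{2t+1}=\bigcup_{x\in V_{2t}}[x]_{\pi_1}$, $V_{2t+2}=\bigcup_{x\in V_{2t+1}}[x]_{\pi_2}$, for which $[w]_\pi=\bigcup_{t<\omega}V_t$ and, by the above, each $V_t$ is finite. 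Thus unbounded length forces $\ell_t=\max\{\,|x|\mid x\in V_t\,\}$ to tend to infinity; and since every single rewriting step changes the length by at most a fixed constant $C$ (the largest length-difference occurring within a clique), the growth is gradual and the two cliques must be used in genuine alternation infinitely often.

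The crux, and the step I expect to be the main obstacle, is to convert this unbounded alternating growth into an honest square. The mechanism I would exploit is periodicity. Because neither clique alone can lengthen a word without bound, each increment of $\ell_t$ is caused by a length-increasing substitution of one clique whose freshly inserted factor \emph{overlaps}, after one further substitution of the other clique, the material inserted at the previous increment; iterating the growth then behaves like repeatedly applying one fixed length-increasing transformation at one position. I would make this precise by extracting, from the infinitely many growth events, a \emph{growth loop}: a recurrence of the same local configuration (the same rewrite word in the same relative position) after a bounded number of steps. Two overlapping occurrences of one and the same factor force a common period (Fine and Wilf), and a word containing a factor of length at least twice its period contains a subword of the form $ss$. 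Hence a sufficiently long member of $[w]_\pi$ must carry a square, contradicting $[w]_\pi\subseteq SF(A)$.

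The delicate point is to \emph{guarantee} that the loop produces a genuinely periodic factor, rather than merely a square-free, Thue--Morse-like one. Here I would use that each of $\pi_1,\pi_2$ is a single flat class: the relation is symmetric, every $u_i$ may be rewritten to every $u_j$ and back, and the admissible length increments are drawn from a fixed finite multiset. This rigidity, together with the finiteness of the single-clique classes of every $w'\in[w]_\pi$, should force the recurring insertion to reproduce the \emph{same} word at a \emph{fixed} shift, which is exactly the periodic situation above. Identifying the loop, controlling the overlap, and ruling out aperiodic growth is the heart of the argument; once it is carried out, the contradiction shows that the lengths of the words in $[w]_\pi$ are bounded, whence $[w]_\pi$ is finite and condition $(\ast)$ holds.
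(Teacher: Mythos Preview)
Your setup is sound: the contrapositive reduction, the observation that $[w']_{\pi_1}$ and $[w']_{\pi_2}$ are finite for every $w'\in[w]_\pi$ by the single-clique case, and the alternating filtration $V_t$ are all correct and useful. The gap is precisely where you locate it yourself. You write that the recurring local configuration ``should force the recurring insertion to reproduce the same word at a fixed shift,'' but you give no mechanism for this. A growth loop in your sense---the same clique word reappearing after a bounded number of steps---does not by itself pin down \emph{where} in the growing word the insertion happens, nor does it prevent the intervening rewrites of the other clique from altering the surrounding context so that no two insertions actually overlap as factors. Pigeonhole on local configurations yields recurrence of a \emph{type} of event, not recurrence of a \emph{factor at a position}; without the latter, Fine--Wilf does not apply and the aperiodic, Thue--Morse-like behaviour you explicitly worry about is not excluded. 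The phrase ``once it is carried out'' is an accurate description: the heart of the argument is absent, not merely compressed.

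The paper does not argue by contradiction or periodicity at all; it constructs an explicit finite superset $T_w\supseteq[w]_\pi$. The device is a \emph{linear decomposition}: a factorisation $x=x_1x_2\cdots x_n$ in which each padded block $p_ix_iq_i$ lies in the finite set $\overline{D}_\sigma\cup\overline{D}_\rho$ and consecutive blocks overlap in a prescribed way (Definition~1). One shows (Lemmas~2--6) that for $x\in SF(A,\pi)$ a single $\pi$-rewrite inside a linearly decomposable factor preserves the order $n$ of the decomposition, and that maximal linear occurrences in a square-free word do not overlap. Hence $w$ has a canonical form $r_1x_1r_2x_2\cdots r_nx_nr_{n+1}$ with the $r_i$ containing no linear factor and each $x_i\in Lin(n_i)$; every $\pi$-rewrite acts inside a single $x_i$, replacing it by another element of the \emph{finite} set $Lin(n_i)$ while leaving the $r_i$ and all $n_i$ unchanged. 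The resulting set $T_w$ is finite and contains $[w]_\pi$. The control your sketch is missing---over \emph{where} the two cliques can interact and how that interaction propagates---is exactly what the linear-decomposition machinery supplies, and it is not clear that any soft periodicity argument can replace it.
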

\begin{proof}[Доказательство]
Для каждой системы определяющих соотношений $\theta$ в алфавите $A$
множество всех слов, входящих в определяющие соотношения, называется
множеством определяющих слов и обозначается символом $D_\theta$:
\mbox{$x \in D_\theta$} тогда и только тогда, когда \mbox{$\exists y
\in A^\ast \; ((x, y) \in \theta \vee (y, x) \in \theta)$}.
Обозначим через $\sigma$ систему соотношений \mbox{$\{\, ( \, u_i,
u_j \,) \mid 1\leq i < j \leq n \,\}$}, а через $\rho$ систему
соотношений \mbox{$\{\, (\, v_k, v_l \,) \mid 1\leq k < l\leq m
\,\}$}. Обозначим символом $\overline{D}_\sigma$ замыкание множества
$D_\sigma$ по отношению $\stackrel{\rho}{=}$, а символом
$\overline{D}_\rho$ замыкание множества $D_\rho$ по отношению
$\stackrel{\sigma}{=}$. Систему определяющих соотношений
\mbox{$(\,\overline{D}_\sigma \times \overline{D}_\sigma \, \cup \,
\overline{D}_\rho \times \overline{D}_\rho\,) \, \setminus \,
\{\,(\, u, u\, )\mid u \in A^{\ast} \, \}$} обозначим $\tau$.
Очевидно, отношения $\stackrel{\pi}{=}$ и $\stackrel{\tau}{=}$ на
множестве $A^\ast$ совпадают. Для сокращения записи вместо выражения
\mbox{$(\, x, y \,) \in \overline{D}_\sigma \times
\overline{D}_\sigma \, \cup \, \overline{D}_\rho \times
\overline{D}_\rho$} будем использовать знакосочетание \mbox{$x \sim
y$}.
\par Следующее утверждение легко следуют из
выполненности условия $(\ast)$ для систем определяющих соотношений
вида \mbox{$ \{\, (\, u_i, u_j \, )\mid 1\leq i < j \leq n\}$}.
\begin{lm} Условие $(\ast)$ выполнено, если имеет место
один из следующих фактов:\\
1. Хотя бы одно из множеств $\overline{D}_\sigma$,
$\overline{D}_\rho$ бесконечно.\\
2. Хотя бы одно из множеств $\overline{D}_\sigma$ и
$\overline{D}_\rho$ не замкнуто по отношению $\stackrel{\sigma}{=}$
и $\stackrel{\rho}{=}$ соответственно.\\
3. \mbox{$D_\sigma=\emptyset$} или \mbox{$D_\rho=\emptyset$} или
\mbox{$\overline{D}_\sigma\, \cap \, \overline{D}_\rho \neq
\emptyset$}.
\end{lm}
\par Действительно, пусть слово $w$ бесквадратно относительно системы соотношений $\pi$.
\par Допустим, что множество $\overline{D}_\sigma$ бесконечно.
Если класс $[w]_\pi$ не содержит ни одного слова \mbox{$w' \in
A^\ast D_\sigma A^\ast$}, то \mbox{$[w]_\pi = [w]_\rho$}, и
\mbox{$\|[w]_\pi\| < \omega$} ввиду очевидного включения
\mbox{$SF(A, \pi)\subseteq SF(A, \rho)$}. Противоположный случай
невозможен в силу следующей цепочки следствий: \mbox{$\|[w']_\rho\|
= \omega \Rightarrow w' \notin SF(A, \rho) \Rightarrow w \notin
SF(A, \pi)$}. Аналогично рассматривается симметричный случай
бесконечности множества $D_{\rho}$, что доказывает первую часть
утверждения.
\par
Пусть теперь множество $\overline{D}_\sigma$ не замкнуто по
отношению $\stackrel{\sigma}{=}$. Тогда существуют слова $r$ и $s$
из $A^\ast$ и слово $u$ из $D_\sigma$, для которых имеет место
включение \mbox{$rus \in \overline{D}_\sigma$}, причем слово
\mbox{$rs$} непусто. Отсюда \mbox{$u \stackrel{\pi}{=} rus
\stackrel{\pi}{=} rruss \notin SF(A)$} и, следовательно,
\mbox{$A^\ast D_\sigma A^\ast \cap SF(A, \pi) = \emptyset$}. Поэтому
\mbox{$[w]_\pi = [w]_\rho$}. Симметричный случай рассматривается
аналогично, что доказывает вторую часть утверждения.
\par Третью часть утверждения достаточно
доказать в предположении конечности множеств $\overline{D}_\sigma$ и
$\overline{D}_\rho$. В этом случае она сразу следует из того, что
равенство относительно системы соотношений $\pi$ совпадает с
равенством относительно системы соотношений \mbox{$\{\,(\, w_i, w_j
\,)\mid 1 \leq i < j \leq k \, \}$}, где множество \mbox{$\{\,w_{i}
\mid 1\leq i \leq k \, \}$} есть \mbox{$\overline{D}_\sigma \cup
\overline{D}_\rho$}. \qed
\par
В дальнейшем будем считать, что множества $\overline{D}_\sigma$ и
$\overline{D}_\rho$ конечны, непусты, замкнуты по отношениям
соответственно $\stackrel{\sigma}{=}$ и $\stackrel{\rho}{=}$, и
имеют пустое пересечение. В частности, отсюда следует, что множество
$D_{\pi}$ не содержит пустого слова (в противном случае каждое из
множеств $\overline{D}_\sigma$ и $\overline{D}_\rho$ бесконечно), а
множество \mbox{$D_{\tau}$} замкнуто по отношению
$\stackrel{\pi}{=}$.
\begin{df}
Последовательность непустых слов \mbox{$x_{1}, x_{2}, \ldots ,
x_{n}$}, \mbox{$1 \leq n < \omega$}, назовем линейным разложением
слова \mbox{$x = x_{1}x_{2} \ldots x_{n}$}, если существуют такие
слова $ p_i, \, q_i, \, u_i, \, v_i$, \mbox{$1 \leq i \leq n$}, что
выполнены следующие условия:\begin{itemize}
\item[$(1)$] $p_{i}x_{i}q_{i} \sim q_{i-1}u_{i}$ для всех номеров $i$, $1 < i \leq n$,
\item[$(2)$] $p_{i}x_{i}q_{i} \sim v_{i}p_{i+1}$ для всех номеров $i$, $1 \leq i < n$,
\item[$(3)$] $q_i = \emptyset \vee p_{i+1} = \emptyset$ для всех номеров $i$, $1 \leq i < n$,
\item[$(4)$] $p_{1} = q_{n} = \emptyset$,
\item[$(5)$] $n = 1 \Rightarrow x_{1} \in D_{\tau}$.
\end{itemize}
\end{df}
Для любого номера $i$, \mbox{$1 \leq i \leq n$}, слова $p_i$ и $q_i$
будем называть соответственно $i$-ым левыми и $i$-ым правым
дополнительным членом линейного разложения \mbox{$x_{1}, x_{2},
\ldots, x_{n}$} слова $x$. Слова, имеющие линейное разложение с
числом членов не более $n$, назовем линейно разложимыми порядка $n$.
Множество всех линейно разложимых слов порядка $n$ будем обозначать
символом $Lin(n)$. Положим \mbox{$Lin = \bigcup\limits_{\, n <
\omega \, } Lin(n)$}. Элементы множества $Lin$ будем называть
линейно разложимыми словами.
\par
\begin{lm}Пусть последовательность \mbox{$x_{1}, x_{2},
\ldots , x_{n}$}, \mbox{$1 \leq n < \omega$}, есть линейное
разложение слова $x$, и слова $p_i$ и $q_i$, \mbox{$1 \leq i \leq n
$}, суть $i$-ые левые и $i$-ые правые дополнительные члены данного
линейного разложения. Тогда: \begin{itemize} \item[1.] Для каждого
номера $i$, \mbox{$1 \leq i \leq n$}, существуют такие слова $f$ и
$g$, что выполнены равенства
\\ $x_{1}x_{2}\ldots x_{i-1} \stackrel{\pi}{=}
fp_{i}$ и $x_{i+1}x_{i+2}\ldots x_{n} \stackrel{\pi}{=} q_{i}g$.
\item[2.] Для каждого номера $i$, \mbox{$1 \leq i \leq n$},
последовательность слов \mbox{$x_{1}, x_{2}, \ldots , x_{i}q_{i}$},
есть линейное разложение слова \mbox{$x_{1}
x_{2} \ldots x_{i}q_{i}$},\\
а последовательность слов \mbox{$p_{i}x_{i}, x_{i+1}, \ldots ,
x_{n}$} есть линейное разложение слова \mbox{$p_{i}x_{i}x_{i+1}
\ldots x_{n}$}.
\item[3.] Если для некоторого номера $i$, \mbox{$1 \leq i \leq n$},
выполнено равенство \mbox{$x_{i} \stackrel{\pi}{=} x'$}, то
последовательность \mbox{$x_{1}, x_{2}, \ldots , x_{i-1}, x',
x_{i+1}, \ldots , x_{n}$} есть линейное разложение слова
\mbox{$x_{1}x_{2}\ldots x_{i-1}x'x_{i+1} \ldots x_{n}$}.
\item[4.] Пусть также последовательность слов \mbox{$y_{1}, y_{2},
\ldots , y_{m}$}, \mbox{$1 \leq m < \omega$}, есть линейное
разложение слова $y$, и для некоторых слов $e, x', y'$ выполнено:
\mbox{$x_{n} = x'e \; \& \; y_{1} = ey'$}. Тогда слово \mbox{$z =
x_{1}x_{2} \ldots x_{n-1}x'ey'y_{2} \ldots y_{m-1}y_{m}$}
принадлежит множеству \mbox{$ Lin(\, n+m+sign(\, |x'y'|\,)-1 \, )$},
символ $sign(t)$ обозначает функцию из $\omega$ в \mbox{$\{\, 0, 1
\,\}$}, равную нулю при \mbox{$t = 0$}, и единице иначе.
\par При этом в качестве линейного разложения слова $z$ можно взять последовательность: \\ \mbox{$x_{1}, x_{2},
\ldots , x_{n-1}, x', y_{1}, \ldots , y_{m-1} , y_{m}$},\; если
слово $x'$ непусто, \\ \mbox{$x_{1}, x_{2}, \ldots , x_{n}, y',
y_{2}, \ldots , y_{m-1} , y_{m}$}\; если слово $y'$ непусто, и
\\\mbox{$x_{1}, x_{2}, \ldots , x_{n-1}, e , y_{2},
\ldots , y_{m-1} , y_{m}$}\; если \mbox{$x' = y' = \emptyset$}.
\item[5.] При любом натуральном $n$ множество $Lin(n)$ конечно.
\end{itemize}
\end{lm}
\begin{proof}\emph{1.} Индукцией по $i$. При \mbox{$i=1$} слова \mbox{$x_{1}x_{2}\ldots x_{i-1}
$} и $p_i$ пусты (последнее по определению $1$), поэтому пустое
слово $f$ удовлетворяет условию. Пусть \mbox{$i\geq 2$}, и
утверждение доказано для всех \mbox{$j<i$}. Если слово $q_{i-1}$ не
пусто, то \mbox{$p_i=\emptyset$} по определению $1$, и можно
положить \mbox{$f=x_1x_2\ldots x_{i-1}$}. В противном случае по
предположению индукции для некоторого слова $f'$ имеет место
равенство \mbox{$x_1x_2\ldots x_{i-1}
\stackrel{\pi}{=}f'p_{i-1}x_{i-1}$}, откуда по определению $1$ имеем
\mbox{$x_1x_2\ldots x_{i-1}\stackrel{\pi}{=}f'v_{i-1}p_{i}$} для
некоторого слова $v_{i-1}$, и можно положить \mbox{$f=f'v_{i-1}$}.
Второе равенство доказывается симметрично.
\\ \emph{2.} Тривиальной проверкой: для
всех номеров \mbox{$j\leq i$} $j$-ые левые и правые дополнительные
члены линейного разложения \mbox{$x_{1}, x_{2}, \ldots ,
x_{i}q_{i}$} можно положить равным соответствующим дополнительным
членам линейного разложения \mbox{$x_{1}, x_{2}, \ldots , x_{n}$},
кроме $i$-го правого дополнительного члена, который надо положить
равным пустому слову. Симметрично для последовательности
\mbox{$p_{i}x_{i}, x_{i+1}, \ldots , x_{n}$}.
\\ \emph{3.} Следует из замкнутости множества \mbox{$D_{\tau}$}
по отношению $\stackrel{\pi}{=}$ (левые и правые члены линейного
разложения \ldots{$x_1, x_2, \ldots, x_{i-1}, x',x_{i+1},\ldots,
x_{n}$} можно положить равными соотвествующим левым и правым членам
линейного разложения \mbox{$x_1,x_2,\ldots, x_{n}$}).
\\ \emph{4.} Обозначим для каждого линейного разложения $\xi$ некоторого слова
$x$ символами $L(\xi)$ и $R(\xi)$ последовательности соответственно
левых и правых дополнительных членов. Для доказательства данного
пункта утверждения нужно для каждой последовательности $\xi$ из его
формулировки предъявить последовательности $L(\xi)$ и $R(\xi)$ и
доказать выполненность условий определения $1$. Пусть
$$
L(x_1,x_2,\ldots,x_n)=(p_1,p_2,\ldots, p_n), \,
R(x_1,x_2,\ldots,x_n)=(q_1,q_2,\ldots, q_n),
$$
$$
L(y_1,y_2,\ldots,y_m)=(p'_1,p'_2,\ldots, p'_m), \,
R(y_1,y_2,\ldots,y_m)=(q'_1,q'_2,\ldots, q'_m).
$$
Тогда если слово $x'$ непусто, можно положить
$$
L(x_{1}, x_{2}, \ldots , x_{n-1}, x', y_{1}, \ldots , y_{m-1} ,
y_{m})= (p_1, p_2,\ldots,p_n,p'_1,p'_2,\ldots,p'_m),
$$
$$
R(x_{1}, x_{2}, \ldots , x_{n-1}, x', y_{1}, \ldots , y_{m-1} ,
y_{m})= (q_1, q_2,\ldots,q_{n-1}, e, q'_1,q'_2,\ldots,q'_m).
$$
Поскольку слово $p'_1$ пусто, условие $(3)$ определения $1$
выполнено для номера \mbox{$i=n$}. Кроме того, слова
\mbox{$p_{n}x_{n}$} и \mbox{$y^{}_{1}q'_1$} принадлежат множеству
\mbox{$D_{\tau}$}, откуда следует, что \mbox{$p'_1y^{}_{1}q'_1\sim
ey'q'_1$} и \mbox{$p_{n}x'e\sim p_{n}x'ep'_1$}, что показывает
выполненность условия $(1)$ определения $1$ для номера
\mbox{$i=n+1$} и условия $(2)$ для номера \mbox{$i=n$}. Слово $q_n$
пусто, поэтому \mbox{$p_nx'e=p_nx_nq_n\sim q_{n-1}v_n$} для
некоторого слова $v_n$, что показывает выполненность условия $(1)$
определения $1$ для номера \mbox{$i=n$}. Для всех остальных номеров
выполненность каждого из условий определения $1$ гарантируется
выполненностью соответствующего условия для линейных разложений
\mbox{$x_1,x_2,\ldots,x_n$} и \mbox{$y_1,y_2,\ldots,y_n$}.
\par Если
слово $x''$ непусто, можно положить
$$
L(x_{1}, x_{2}, \ldots , x_{n}, y', y_{2}, \ldots , y_{m-1} ,
y_{m})= (p_1, p_2,\ldots,p_n,e,p'_2,\ldots,p'_m),
$$
$$
R(x_{1}, x_{2}, \ldots , x_{n}, y', y_{2}, \ldots , y_{m-1} ,
y_{m})= (q_1, q_2,\ldots,q_{n}, q'_1,q'_2,\ldots,q'_m).
$$
Доказательство выполненности условий определения $1$ аналогично.
\par Наконец, если \mbox{$x' = y' = \emptyset$} можно положить
$$
L(x_{1}, x_{2}, \ldots , x_{n-1}, e , y_{2}, \ldots , y_{m-1} ,
y_{m})= (p_1, p_2,\ldots,p_n,p'_2,\ldots,p'_m),
$$
$$
R(x_{1}, x_{2}, \ldots , x_{n-1}, e , y_{2}, \ldots , y_{m-1} ,
y_{m})= (q_1, q_2,\ldots,q_{n}, q'_2,\ldots,q'_m).
$$
Поскольку слово $q_n$ пусто, условие $(3)$ определения $1$ выполнено
для номера \mbox{$i=n$}. Слово \mbox{$p'_2y^{}_2q'_2$} принадлежит
множеству \mbox{$D_{\tau}$}, откуда следует, что
\mbox{$p'_2y^{}_2q'_2\sim q^{}_np'_2y^{}_2q'_2$}, что показывает
выполненность условия $(1)$ определения $1$ для номера
\mbox{$i=n+1$}. Слово \mbox{$p_nx_nq_n=p_ne$} принадлежит множеству
\mbox{$D_{\tau}$}, поэтому если слово \mbox{$p'_{2}$} пусто, то
\mbox{$p_neq_n\sim p_neq^{}_np'_{2}$}. Если же \mbox{$p'_{2}\neq
\emptyset$}, то \mbox{$q'_{1}=\emptyset$} и слово \mbox{$e=y_1$}
принадлежит множеству \mbox{$D_{\tau}$}, причем для некоторого слова
$v'_1$ выполнено \mbox{$e\sim v'_1p'_2$}. Тогда \mbox{$p_neq_n\sim
p^{}_{n}v'_1p'_2$} в силу замкнутости множества $\mbox{$D_{\tau}$}$
по отношению $\stackrel{\pi}{=}$. Таким образом в обоих случаях
выполнено условие $(2)$ определения $1$ для номера \mbox{$i=n$}. Для
всех остальных номеров выполненность каждого из условий определения
$1$ гарантируется выполненностью соответствующего условия для
линейных разложений \mbox{$x_1,x_2,\ldots,x_n$} и
\mbox{$y_1,y_2,\ldots,y_n$}.
\\ \emph{4.} Следует из конечности множеств $\overline{D}_\sigma$ и
$\overline{D}_\rho$.
\end{proof}
\begin{lm} Пусть последовательность \mbox{$x_{1}, x_{2},
\ldots , x_{n}$}, $1 \leq n < \omega$, есть линейное разложение
слова \mbox{$x\in SF(A, \pi)$}, а слова $p_i$ и $q_i$, \mbox{$1 \leq
i \leq n $}, суть $i$-ые левые и $i$-ые правые дополнительные
члены данного линейного разложения.\\
Тогда для любого номера $i$, $1\leq i < n$, выполнено \mbox{$\neg
(p_{i}x_{i}q_{i} \sim p_{i+1}x_{i+1}q_{i+1})$}.
\end{lm}
\begin{proof}[Доказательство]
Предположим обратное. Тогда по утверждению $2.1$ для некоторых слов
$f$ и $g$ имеет место равенство \mbox{$x\stackrel{\pi}{=}
fp_{i}x_{i}x_{i+1}q_{i+1}g$}. Если \mbox{$p_{i+1}\neq \emptyset$},
то \mbox{$q_i =\emptyset$}, что влечет:
$$x\stackrel{\pi}{=}
fp_{i+1}\underbrace{x_{i+1}q_{i+1}x_{i+1}q_{i+1}}g\notin SF(A,
\pi).$$ В противном случае $$x\stackrel{\pi}{=}
f\underbrace{p_{i}x_{i}p_{i}x_{i}}q_{i+1}g\notin SF(A, \pi).$$
Противоречие.
\end{proof}
\begin{lm} Пусть последовательность \mbox{$x_{1}, x_{2},
\ldots , x_{n}$}, \mbox{$1 \leq n < \omega$}, есть линейное
разложение слова $x$ и слова $p_i$ и $q_i$, \mbox{$1 \leq i \leq n
$}, суть i-ые левые и i-ые правые дополнительные
члены данного линейного разложения.\\
1. Пусть также для некоторого номера $i$, \mbox{$1
\leq i \leq n$}, и слов $x', x'', u, s, r, h, y$ выполнены условия: \\
$x_i = x'x'' \; \& \; x'' \neq \emptyset$, \\
$xr = x_{1}x_{2} \ldots x_{i-1}x'us =ys\in SF(A, \pi)$, \\
$u = x''h \in D_{\tau}$. \\
Тогда существует линейное разложение $y_1,y_2,\ldots , y_m$ слова
$y$, для которого $(\;y_m = u\;) \; \& \; (\;m\leq i+1\;) \; \& \;
(\;u\sim p_ix_iq_i \Rightarrow m \leq i\;)$.
\\2. Пусть, иначе, для некоторого номера $i$, \mbox{$1
\leq i \leq n$}, и слов $x', x'', u, s, r, h, y$ выполнены условия: \\
$x_i = x'x'' \; \& \; x' \neq \emptyset$, \\
$rx = sux''x_{i+1} \ldots x_{n-1}x_{n} = sy\in SF(A, \pi)$, \\
$u = hx' \in D_{\tau}$. \\
Тогда существует линейное разложение $y_1,y_2,\ldots , y_m$ слова
$y$, для которого $(\;y_1 = u\;) \; \& \; (\;m\leq n-i+2\;) \; \& \;
(\;u\sim p_ix_iq_i \Rightarrow m \leq n-i+1\;)$.
\end{lm}
\begin{proof}[Доказательство]
Пусть выполнены условия пункта 1. Если \mbox{$u\sim p_ix_iq_i $}, то
по утверждению $2.1$ для некоторого слова $f$ выполнено
$$ys=x_{1}x_{2}\ldots x_{i-1}x'us \stackrel{\pi}{=} fp_{i}x'us
\stackrel{\pi}{=} f\underbrace{p_{i}x'p_{i}x'}x''q_{i}s,
$$
и, следовательно, слово $p_ix'$ пусто. Из пустоты слова $p_i$ по
утверждению $2.2$ имеем, что последовательность
\mbox{$x_1,x_2,\ldots,x_{i-1},x_iq_i$} есть линейное разложение
слова \mbox{$x_1x_2\ldots x_{i-1}x_iq_i$}. Далее, из предположения
\mbox{$u\sim p_ix_iq_i =x_iq_i$} по утверждению $2.3$ имеем, что
последовательность \mbox{$x_1,x_2,\ldots,x_{i-1},u$} есть линейное
разложение слова \mbox{$y=x_1x_2\ldots x_{i-1}u$}. Это доказывает
пункт $1$ в рассматриваемом случае.
\par Пусть теперь \mbox{$\neg (u\sim p_ix_iq_i) $}. Если
слово $q_i$ пусто, то по утверждению $2.2$ последовательность
\mbox{$x_1,x_2,\ldots,x_{i}$} есть линейное разложение слова
\mbox{$x_1x_2\ldots x_{i}$}. Далее, поскольку, очевидно, слово
$x''h$ принадлежит множеству $Lin$ и имеет линейное разложение длины
$1$, последовательность \mbox{$x_1,x_2,\ldots, x_{i-1},x',u$} есть
линейное разложение слова \mbox{$y=x_1x_2\ldots x_{i-1}x'u$} по
утверждению $2.4$, из чего вновь следует пункт $1$ доказываемого
утверждения.
\par Если
же \mbox{$q_i\neq \emptyset$}, то по определению $1$ выполнено
\mbox{$i<n \;\&\; p_{i+1}= \emptyset$}. Покажем, что это приводит к
противоречию. Действительно, утверждению $3$ в этом случае
\mbox{$u\sim p_{i+1}x_{i+1}q_{i+1}=x_{i+1}q_{i+1}$}, откуда в силу
утверждения $2.1$ для некоторого слова $g$ справедлива цепочка
отношений:
$$ ys \stackrel{\pi}{=} x_{1}x_{2}\ldots
x_{i}x_{i+1}q_{i+1}gr \stackrel{\pi}{=} x_{1}x_{2}\ldots x_{i}ugr
\stackrel{\pi}{=} x_{1} x_{2}\ldots x'\underbrace{x''x''}hgr \notin
SF(A, \pi),
$$
\par
Второй пункт утверждения, симметричный первому, доказывается
аналогично.
\end{proof}
\begin{lm} Пусть для некоторых слов $x,y,z,u$ выполнены условия \mbox{$x = yuz \in SF(A, \pi)\cap Lin(n)$} и \mbox{$u
\sim v$}. Тогда слово $yvz$ принадлежит множеству \mbox{$Lin(n)$}.
\end{lm}
\begin{proof}[Доказательство]
Рассмотрим линейное разложение $x_{1}, x_{2}, \ldots , x_{n}$ слова
$x$. Тогда существуют такие номера $i, j$, \mbox{$1 \leq i \leq j
\leq n$}, и слова $x'_i, x''_i, x'_j, x''_j$, что
$$
x_i = x'_ix''_i \; \& \; x''_i \neq \emptyset \; \& \; y = x_{1}
 \ldots x_{i-1}x'_i \; \& $$
$$ \; x_j = x'_jx''_j \; \& \; x'_j \neq
\emptyset \; \& \; z = x''_{j} x_{j+1} \ldots x_{n}.
$$
Если \mbox{$i=j$}, то утверждение немедленно следует из утверждения
$2.3$. В противном случае из утверждения $4$ следует существование
линейного разложения \mbox{$y_1,y_2,\ldots,y_{m_1-1},y_{m_1} $}
слова $yu$, для которого \mbox{$y_{m_1}=u $} и \mbox{$m_1 \leq
i+1$}, а также линейного разложения
\mbox{$z_1,z_2,z_3,\ldots,z_{m_2}$} слова $uz$, для которого
\mbox{$z_1=u$} и \mbox{$m_2 \leq n-j+2$}. Тогда по утверждению $2.3$
последовательность \mbox{$y_1,y_2,\ldots,y_{m_1-1},v$} есть линейное
разложение слова \mbox{$y_1y_2\ldots y_{m_1-1}v$}, а
последовательность \mbox{$v,z_2,z_3\ldots,z_{m_2}$} есть линейное
разложение слова \mbox{$vz_2z_3\ldots z_{m_2}$}. Тогда из
утверждения $2.4$ следует, что слово \mbox{$yvz=y_1y_2\ldots
y_{m_1-1}vz_2z_3\ldots z_{m_2}$} принадлежит множеству \mbox{$
Lin(m_1+m_2-1)$}. Очевидно, при \mbox{$j\geq i+2$} это влечет
включение \mbox{$yvz\in Lin(n)$}. Если же \mbox{$j=i+1$}, то по
утверждению $3$ имеет место \mbox{$u \sim p_ix_iq_i \;\vee\; u \sim
p_jx_jq_j$}, что по утверждению $4$ влечет выполненность одного из
неравенств \mbox{$m_1\leq i$} и \mbox{$m_2\leq n-i$}. Это вновь
приводит к справедливости доказываемого утверждения.
\end{proof}
\begin{sss}
Класс эквивалентности $[w]_{\pi}$ каждого линейно разложимого
порядка $n$ и бесквадратного относительно системы соотношений $\pi$
слова $w$ конечен и состоит только из линейно разложимых слов
порядка $n$.
\end{sss}
\begin{lm} Пусть для некоторых слов $x,y,e$ выполнены условия \mbox{$xe \in Lin$}, \mbox{$ey \in Lin$} и
\mbox{$xey \in SF(A, \pi)$}. Тогда слово \mbox{$xey$} принадлежит
множеству \mbox{$Lin$}.
\end{lm}
\begin{proof}[Доказательство] Если слово $e$ пусто, утверждение есть
частный случай утверждения $2.4$. Если \mbox{$e\neq\emptyset$},
рассмотрим линейное разложение \mbox{$y_{1}, y_{2}, \ldots , y_{m}$}
слова $ey$. Тогда для некоторого номера $i$, \mbox{$1\leq i\leq m$},
и слов \mbox{$y', y'', e'$} выполнено:
$$ e = y_1y_2\ldots y_{i-1}y' \; \& \; y_i = y'y'' \; \& \; y = y''y_{i+1}y_{i+2}\ldots
y_{m} \; \& \; y'\neq \emptyset.$$ По утверждению $2.1$ существуют
такие слова $f$ и $g$ что $$y_1y_2\ldots y_{i-1}\stackrel{\pi}{=}
fp_i\,\,\text{и}\,\, y_{i+1}y_{i+2}\ldots y_{m}\stackrel{\pi}{=}
q_ig,$$ где $p_i$ и $q_i$ \--- соответственно $i$-ый левый и $i$-ый
правый дополнительные члены указанного линейного разложения.
Заметим, что из этого следует, что слова \mbox{$p_iy_iy_{i+1} \ldots
y_m $} и \mbox{$xfp_iy_iq_ig$} (и все их подслова) бесквадратны
относительно системы соотношений \mbox{$\pi$}.
\par По следствию из утверждения $5$ слово
\mbox{$xfp_iy'\stackrel{\pi}{=}xe$} принадлежит множеству $Lin$.
Покажем, что
$$
xfp_iy_{i}q_{i}\in Lin,
$$
причем некоторое линейное разложение слова \mbox{$xfp_iy_{i}q_{i}$}
имеет последний член \mbox{$p_iy_{i}q_{i}$}.
\par Действительно, выше замечено включение
\mbox{$xfp_iy_{i}q_{i}\in SF(A,\pi)$}. Рассмотрим линейное
разложение \mbox{$x_{1}, x_{2}, \ldots , x_{n}$} слова
\mbox{$xfp_iy'$}. Поскольку слово $p_iy'$ непусто, существует такой
номер $j$, \mbox{$1\leq j\leq n$}, что
$$
x_j=x'x''\,\&\, x''\neq\emptyset\,\&\,xfp_iy'y''q_{i}=x_1x_2\ldots
x_{j-1}x'p_ix_{i}q_{i},
$$
и данный факт немедленно следует из утверждения $4$, так как
\mbox{$p_ix_{i}q_{i}\in D_{\tau}$}.
\par Рассмотрим теперь слово \mbox{$p_iy_iy_{i+1} \ldots y_m $}. Оно принадлежит
множеству $Lin$ утверждению $2.2$ и, как замечено выше, бесквадратно
относительно системы соотношений $\pi$. Поэтому по следствию из
утверждения $5$ множество $Lin$ содержит слово
\mbox{$p_iy_iq_ig\stackrel{\pi}{=}p_iy_iy_{i+1} \ldots y_m$}, причем
слово $p_iy_iq_i$ есть первый член некоторого линейного разложения
слова $p_iy_iq_ig$ (последнее следует, например, из утверждения
$4$).
\par
Тогда слово $xfp_iy_iq_ig$ принадлежит множеству $Lin$ по
утверждению $2.4$, а равное ему относительно системы соотношений
$\pi$ слово $xey$ \--- по следствию из утверждения $5$.
\end{proof}
\par Дальнейшие рассуждения удобно
проводить, используя понятие \emph{вхождение} (см.
~\cite{adan}).\par Пусть дан алфавит $A$. Вхождением слова $e$ в
алфавите $A$ в слово \mbox{$x = peq$} в том же алфавите называется
слово \mbox{$p\ast e \ast q$} в алфавите \mbox{$A \cup \{\, \ast \,
\}$}, где символ \mbox{$\ast$} не принадлежит алфавиту $A$. Слово
$e$ называется \emph{основой} вхождения \mbox{$p\ast e \ast q$}.
Пусть для некоторых слов $q$, $p$, $e$, $q$, $r$, $d$, $s$ в
алфавите $A$ имеют место равенства \mbox{$x = peq = rds$}. Будем
говорить, что вхождение \mbox{$p \ast e \ast q$} \emph{содержится}
во вхождении \mbox{$r \ast d \ast s$}, если \mbox{$|r| \leq |p|$} и
\mbox{$|s| \leq |q|$}. Вхождения \mbox{$p \ast e \ast q$} и \mbox{$r
\ast d \ast s$} слов $e$ и $d$ в слово \mbox{$x = peq = rds$}
\emph{пересекаются}, если найдется некоторое вхождение \mbox{$v \ast
f \ast w$} непустого слова $f$ в то же слово \mbox{$x = vfs$},
которое содержится одновременно во вхождениях \mbox{$p \ast e \ast
q$} и \mbox{$r \ast d \ast s$}. Максимальное по длине основы из
таких вхождений называется \emph{пересечением} вхождений \mbox{$p
\ast e \ast q$} и \mbox{$r \ast d \ast s$}. \emph{Объединением}
вхождений называется минимальное по длине основы вхождение, в
котором эти вхождения содержатся.
\begin{df} Вхождение $\varphi$ слова \mbox{$e \in
Lin$} в слово \mbox{$w \in A^\ast$} будем называть
\emph{максимальным}, если оно не содержится ни в каком отличном от
себя вхождении с линейно разложимой основой. Для каждого
\mbox{$n<\omega$} множество всех максимальных вхождений $\varphi$
различных слов \mbox{$e\in Lin(n)$} в слово $w$ будем обозначать
символом \mbox{$MaxLin( n, w )$}.
\end{df}
\par Из утверждения $6$ следует следующее
\begin{zam}
Если некоторое максимальное вхождение $\varphi$ слова \mbox{$e \in
Lin$} в слово \mbox{$w \in SF(A, \pi)$} пересекается с некоторым
вхождением слова $\psi$ слова \mbox{$e' \in Lin$} в слово $w$, то
вхождение $\psi$ содержится во вхождении $\varphi$. Таким образом,
максимальные вхождения линейно разложимых слов в бесквадратное
относительно системы соотношений $\pi$ слово не пересекаются.
\end{zam}
Из последнего факта следует, что каждое слово \mbox{$w \in SF(A,
\pi)$} может быть представлено в виде:
$$ w = r_1x_1r_2x_2 \ldots r_nx_nr_{n+1} \; (\, 0 \leq n < \omega \,),$$ где для всех $i$, \mbox{$1 \leq i \leq
n $}, слова $r_i$ не содержат определяющих подслов, а каждое из
вхождений \mbox{$r_1x_1r_2x_2 \ldots r_i\ast x_i\ast r_{i+1}\ldots
r_nx_nr_{n+1}$} принадлежит множеству \mbox{$MaxLin(n_i, w)$} для
некоторых натуральных чисел $n_i$, \mbox{$0 < n_i < \omega $}.
Считая натуральные числа $n$ и $n_i$, \mbox{$1\leq i\leq n$},
фиксированными, определим множество $T_w \rightleftharpoons
\\ \{ \,r_1y_1r_2y_2 \ldots r_ny_nr_{n+1} \mid y_i \in Lin(n_i) \;
\& \; r_1y_1r_2y_2 \ldots r_i\ast y_i\ast r_{i+1}\ldots
r_ny_nr_{n+1} \in MaxLin(\, n_i,
r_1y_1r_2y_2 \ldots r_ny_nr_{n+1} \,), \; 1 \leq i \leq n\, \}.$\\
По утверждению $2.5$ это множество конечно. Таким образом, для
доказательства теоремы достаточно доказать следующее утверждение:
$$ \forall w \in A^\ast \; w \in SF(A,\pi) \Rightarrow [w]_\pi \subseteq
T_w. \eqno (\ast \ast)$$ Пусть слово \mbox{$rus = r_1y_1r_2y_2
\ldots r_ny_nr_{n+1}$} принадлежит множеству \mbox{$T_{w}\cap SF(A,
\pi)$}, и для каждого номера $i$, \mbox{$1 \leq i \leq n$}, слово
$r_i$ не содержат определяющих слов, а вхождение \mbox{$r_1x_1r_2x_2
\ldots r_i\ast x_i\ast r_{i+1}\ldots r_nx_nr_{n+1}$} принадлежит
множеству \mbox{$MaxLin(n_i, w)$}. Пусть выполнено
\mbox{$v\stackrel{\pi}{\leftrightarrow} u$}. Докажем, что слово
\mbox{$rvs$} принадлежит множеству \mbox{$T_w$}.
\par Согласно замечанию, вхождение \mbox{$r \ast u \ast s$}
содержится в одном из вхождений \mbox{$r_1y_1r_2y_2 \ldots r_i \ast
y_i \ast r_{i+1} \ldots r_ny_nr_{n+1}$}, т.е. существуют такие слова
$r', s'$, что \mbox{$r = r_1y_1r_2y_2 \ldots r_ir'$} и \mbox{$y_i =
r'us'$}. Тогда по утверждению $5$ имеет место включение \mbox{$r'vs'
\in Lin(n_i)$}. Допустим, что вхождение \mbox{$r_1y_1r_2y_2 \ldots
r_i \ast r'vs' \ast r_{i+1}\ldots r_ny_nr_{n+1}$} не максимально.
Это означает, что для некоторых слов $a, b, c, d$ выполнены
равенства \mbox{$r_1y_1r_2y_2 \ldots r_i = ab$} и
\mbox{$r_{i+1}y_{i+1} \ldots r_ny_n r_{n+1} = cd$}, причем слово
$br'vs'c$ принадлежит множеству $Lin$ и \mbox{$bc \neq \emptyset$}.
Тогда \mbox{$br'us'c \in Lin$} по утверждению $5$, и, следовательно,
\mbox{$r_1y_1r_2y_2 \ldots r_i \ast r'us' \ast r_{i+1}\ldots
r_ny_nr_{n+1} \notin MaxLin(\, n_i,\, rus \,)$}; противоречие.
\par Таким образом,
слово \mbox{$rvs$} принадлежит множеству \mbox{$T_w$}, что
доказывает утверждение $(\ast \ast)$.
\end{proof}

\begin{sss}Проблема распознавания бесквадратности данного слова
относительно системы из двух определяющих соотношений
алглоритмически разрешима.
\end{sss}
\begin{zakzam}
Минимальное количество соотношений, достаточное для построения
копредставления с неразрешимой проблемой распознавания
бесквадратности данного слова, неизвестно. Пример A. Карпи и A. де
Лука содержит более двух тысяч соотношений.
\end{zakzam}

\begin{blagodar}
Автор выражает глубокую благодарность академику РАН профессору Адяну
Сергею Ивановичу за постановку задачи и рекомендации по выбору
основных определений.
\end{blagodar}

\thebibliography{99}
\bibitem{lol} Ж. Лаллеман. Полугруппы и
комбинаторные приложения. Москва, Мир, 1985.
\bibitem{archon} С. Е. Аршон. Доказательство существования n-значных бесконечных
ассиметрических последовательностей. Матем. сб., т. 2(44), (4),
с.769-779, 1937.
\bibitem{Brink} J.Brinkhuis,  Non-Repetitive
Sequences on Three Symbols. Quart. J. Math. Oxford Ser. 2 34,
145-149, 1983.
\bibitem{Karpi} A. Carpi, A. de Luca:
Non-Repetitive Words Relative to a Rewriting System. Theor. Comput.
Sci. 72(1), p. 39-53, 1990.
\bibitem{Karpi2} A. Carpi. On the Number of Abelian Square-free Words on Four Letters. Discrete Applied Mathematics 81(1-3), p. 155-167,
1998.
\bibitem{Karpi4}A. Carpi, A. de Luca. Repetitions, Fullness, And Uniformity In
Two-Dimensional Words. Int. J. Found. Comput. Sci. 15(2), p.
355-383, 2004.
\bibitem{mati} Ю. В. Матиясевич. Простые примеры
неразрешимых ассоциативных исчислений. Докл. АН СССР, т. 173(6), с.
1264 - 1266; Труды Матем. ин-та В.А.Стеклова, т. 168, с. 218 - 235,
1967.
\bibitem{adan} С. И. Адян. Проблема Бернсайда и тождества в группах.
Москва, Наука, 1975.
\bibitem{bin} R. Bean,
A. Ehrenfeucht, G.F.McNulty. Avoidable pattern s in strings of
symbols. Pacific J. of Math., vol. 85(2), p. 261 - 294, 1979.
\bibitem{Noon} J. Noonan, D. Zeilberger. The Goulden-Jackson Cluster Method: Extensions,
Applications, and Implementations. J. Differ. Eq. Appl. 5, 355-377,
1999.

\end{document}